\documentclass{amsart}
\usepackage{amssymb,amsmath,amsthm,newlfont,enumerate}

\theoremstyle{plain}
\newtheorem{theorem}{Theorem}[section]
\newtheorem{lemma}[theorem]{Lemma}

\theoremstyle{definition}

\theoremstyle{remark}

\newcommand{\CC}{{\mathbb C}}
\newcommand{\DD}{{\mathbb D}}

\newcommand{\TT}{{\mathbb T}}
\newcommand{\ZZ}{{\mathbb Z}}

\newcommand{\cH}{{\mathcal H}}

\renewcommand{\Re}{\operatorname{Re}}
\renewcommand{\hat}{\widehat}

\begin{document}

\title[Coefficient formula counterexample]{A counterexample concerning the coefficient formula for de Branges--Rovnyak norms}

\author[T. Ransford]{Thomas Ransford}

\address{%
D\'epartement de math\'ematiques et de statistique, Universit\'e Laval, Qu\'ebec (QC), G1V 0A6, Canada}

\email{ransford@mat.ulaval.ca}

\thanks{Research supported by  NSERC Discovery Grant RGPIN-2026-04565}

\subjclass{Primary 47B32, Secondary 47B35}

\keywords{De Branges--Rovnyak space, Hardy space, Smirnov class, Toeplitz operator}

\date{10 October 2026}

\begin{abstract}
Let $\cH(b)$ be the de Branges--Rovnyak space associated
to a non-extreme point $b$ of the unit ball of $H^\infty$,
and let $\phi=b/a$, where $a$ is the Pythagorean mate of $b$.
In an earlier article, the author showed that, if $\phi\in H^2$, then
\[
f\in\cH(b) \iff \Bigl(f\in H^2 \text{~and~}\sum_{m\ge0}\Bigl|\sum_{n\ge0}\overline{\hat{\phi}(n)}\hat{f}(m+n)\Bigr|^2<\infty\Bigr),
\]
in which case we have
\[
\|f\|_{\cH(b)}^2=\sum_{m\ge0}|\hat{f}(m)|^2
+\sum_{m\ge0}\Bigl|\sum_{n\ge0}\overline{\hat{\phi}(n)}\hat{f}(m+n)\Bigr|^2.
\]
It was also shown that there exists a space $\cH(b)$ for which $\phi\in\cap_{p<2}H^p$, yet
\[
f\in\cH(b)\not\Rightarrow \sum_{m\ge0}\Bigl|\sum_{n\ge0}\overline{\hat{\phi}(n)}\hat{f}(m+n)\Bigr|^2<\infty.
\]
Here  we complete this circle of ideas by showing that there exists
a space $\cH(b)$
for which $\phi\in\cap_{p<1}H^p$, yet
\[
\Bigl(f\in H^2\text{~and~} \sum_{m\ge0}\Bigl|\sum_{n\ge0}\overline{\hat{\phi}(n)}\hat{f}(m+n)\Bigr|^2<\infty\Bigr)
\not\Rightarrow f\in\cH(b).
\]
\end{abstract}

\maketitle

\section{Introduction}

\subsection{What this article is about}
This article is a sequel to \cite{Ra26a},
which was a study of
the coefficient formula for de Branges--Rovnyak norms.
At the end of that paper, there was an important question
left outstanding, namely \cite[Problem~8.1]{Ra26a}.
The purpose of the present paper is to answer this question.

\subsection{Background on de Branges--Rovnyak spaces}
Here is a very brief summary of the principal definitions
concerning de Branges--Rovnyak spaces
on the open unit disk $\DD$. For details,
we refer to \cite[\S2]{Ra26a} and the references cited therein.

We say that a pair of functions $(b,a)$ in $H^\infty$ is a \emph{Pythagorean pair} if $a$ is outer,  $a(0)>0$, and
 $|b|^2+|a|^2=1$ a.e.\ on the unit circle $\TT$.
The function $a$ is completely determined by $b$ and is 
called its \emph{Pythagorean mate}.
A function $b$ 
occurs as the first element of a Pythagorean pair 
if and  only if it is a non-extreme point of the unit ball of $H^\infty$.

The de Branges--Rovnyak space $\cH(b)$ associated
to a Pythagorean pair $(b,a)$
can be characterized as follows. A function $f$
belongs to $\cH(b)$ if and only if $f\in H^2$ and
$T_{\overline{b}}f\in T_{\overline{a}}(H^2)$,
where $T_{\overline{b}},T_{\overline{a}}$ are the
bounded Toeplitz operators on $H^2$
with symbols $\overline{b}$ and $\overline{a}$ respectively.
In this case, there exists a unique function $g\in H^2$
such that $T_{\overline{b}}f=T_{\overline{a}}g$, and
the $\cH(b)$-norm of $f$ can be computed via the formula
\begin{equation}\label{E:TbTa}
\|f\|_{\cH(b)}^2=\|f\|_{H^2}^2+\|g\|_{H^2}^2.
\end{equation}

If $(b,a)$ is a Pythagorean pair, then the ratio $\phi:=b/a$
belongs to the Smirnov class $N^+$. 
Conversely, every function $\phi\in N^+$ has a unique representation
as $\phi=b/a$, where $(b,a)$ is a Pythagorean pair. 

\subsection{The coefficient formula}

Let $(b,a)$ be a Pythagorean pair and let $\phi:=b/a$.
If  $f$ is holomorphic on an open neighbourhood of the closed unit disk $\overline{\DD}$, then $f\in\cH(b)$ and 
\begin{equation}\label{E:formula}
\|f\|_{\cH(b)}^2=\sum_{m\ge0}|\hat{f}(m)|^2
+\sum_{m\ge0}\Bigl|\sum_{n\ge0}\overline{\hat{\phi}(n)}\hat{f}(m+n)\Bigr|^2.
\end{equation}
This formula is implicit in \cite{Sa08} and proved explicitly in \cite{CGR10}.
In particular, it holds when $f$ is a polynomial.
Since polynomials are dense in $\cH(b)$, one might
suspect that the formula extends to arbitrary functions $f\in\cH(b)$, but in fact this turns out to be false in general.
Counterexamples can be found, for example, in \cite{EFKMR16,MR18,Ra26a}.

The article \cite{Ra26a} arose as an attempt to extend
the range of validity of \eqref{E:formula} to a larger
class of functions $f$.
One of the main results in \cite{Ra26a} was the following
theorem.

\begin{theorem}[\protect{\cite[Theorem~3.1]{Ra26a}}]\label{T:H2}
Let $(b,a)$ be a Pythagorean pair, and suppose that $\phi:=b/a\in H^2$.
Then
\begin{equation}\label{E:equiv}
f\in\cH(b) \iff \Bigl(f\in H^2 \text{~and~}\sum_{m\ge0}\Bigl|\sum_{n\ge0}\overline{\hat{\phi}(n)}\hat{f}(m+n)\Bigr|^2<\infty\Bigr),\end{equation}
in which case \eqref{E:formula} holds.
\end{theorem}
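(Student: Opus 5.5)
The plan is to identify the inner sum in \eqref{E:equiv} as a Toeplitz operator applied to $f$. Since $\phi\in H^2$, for $f\in H^2$ the inner sum $\sum_{n\ge0}\overline{\hat\phi(n)}\hat f(m+n)$ converges absolutely by Cauchy--Schwarz. It equals the $m$-th Fourier coefficient of $P_+(\overline{\phi}f)$, where $\overline\phi f\in L^1$ and $P_+$ is taken coefficientwise. Write $h:=T_{\overline\phi}f$ for the formal power series $\sum_m\bigl(\sum_n\overline{\hat\phi(n)}\hat f(m+n)\bigr)z^m$. The condition on the right of \eqref{E:equiv} then says exactly that $h\in H^2$. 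The theorem will follow from two facts: first, $f\in\cH(b)$ if and only if $h\in H^2$; second, in that case $h=g$, where $g$ is the function in \eqref{E:TbTa}. Formula \eqref{E:formula} is then just \eqref{E:TbTa}.

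The key identity is $T_{\overline b}f=T_{\overline a}h$ whenever $f\in H^2$. To prove it, I will compare Fourier coefficients. We have $\overline b f=\overline a\,\overline\phi f$ with $\overline\phi f\in L^1$ and $\overline a\in L^\infty$. The Fourier coefficients of $\overline a\,u$ for $u\in L^1$ are the convolution sums $\sum_k\overline{\hat a(k)}\hat u(m+k)$. For $m\ge0$, the coefficient of $T_{\overline b}f$ is $\sum_{k}\overline{\hat a(k)}\,\widehat{\overline\phi f}(m+k)$, and this only involves the coefficients $\widehat{\overline\phi f}(j)=\hat h(j)$ with $j\ge m\ge0$. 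The delicate point is justifying this convolution formula when $u$ is only in $L^1$, since $\hat a$ need not be summable. To get around this, I will first prove $T_{\overline b}f=T_{\overline a}h$ for polynomials $f$, where $h$ is itself a polynomial and everything is finite. I will then pass to general $f$ by coefficientwise continuity. The map $f\mapsto\hat h(m)$ is continuous on $H^2$ because $\phi\in H^2$. Both $T_{\overline b}$ and $T_{\overline a}$ act as upper-triangular matrices with absolutely summable rows when $a$ is a polynomial. For general $a$, I will instead use the pairing $\langle T_{\overline a}h,z^m\rangle=\langle h,az^m\rangle$, which is valid coefficientwise whenever $h\in H^2$.

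Given the identity, I will prove the two directions.

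\emph{($\Leftarrow$)} If $h\in H^2$, then $T_{\overline b}f=T_{\overline a}h\in T_{\overline a}(H^2)$, so $f\in\cH(b)$. By the uniqueness of $g$ (which holds because $T_{\overline a}$ is injective, $a$ being outer), we get $g=h$. Hence \eqref{E:formula} holds.

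\emph{($\Rightarrow$)} If $f\in\cH(b)$, then $T_{\overline a}g=T_{\overline b}f$ for some $g\in H^2$, and we must show that $h=g$, in particular that $h\in H^2$. This is the main obstacle, since a priori $h$ is only a formal series, with coefficients that are $O(1)$ by Cauchy--Schwarz. My approach is to test against the functions $a p$, with $p$ a polynomial. On the one hand, $\langle g, ap\rangle=\langle T_{\overline a}g,p\rangle=\langle T_{\overline b}f,p\rangle=\langle f,bp\rangle$. On the other hand, I want to show $\langle f,bp\rangle=\sum_m \hat h(m)\overline{\widehat{ap}(m)}$. This is exactly the formal identity $\langle f,\phi\cdot ap\rangle=\langle \overline\phi f, ap\rangle$. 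I will justify it by Abel summation: replace $f$ by $f_r(z)=f(rz)$, which is holomorphic on a neighbourhood of $\overline\DD$, and then let $r\to1$. The left side converges because $bp\in H^\infty$, and the right side converges because of $\phi\in H^2$ together with $ap\in H^\infty$. This needs care, and is where $\phi\in H^2$ is genuinely used. The identity shows that $h$ and $g$ agree as functionals on $aH^2$ restricted to the polynomial multiples $a\CC[z]$, which are dense in $H^2$ since $a$ is outer. Since the functional defined by $g$ is bounded, I would then argue coefficientwise: using $ap_N\to z^m$ in $H^2$ and the boundedness coming from the pairing with $h$, I get $\hat h(m)=\hat g(m)$ for each $m$. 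Hence $h=g\in H^2$.
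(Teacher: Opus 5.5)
The present paper only quotes this theorem from \cite{Ra26a} and does not prove it, so I have judged your argument on its own terms. The ($\Rightarrow$) direction has a genuine gap at exactly the decisive step. From $T_{\overline a}g=T_{\overline b}f$ you correctly get $\langle g,ap\rangle=\langle f,bp\rangle$ for every polynomial $p$. At best you can then show $\langle f,bp\rangle=\lim_{r\to1}\sum_m r^m\hat h(m)\overline{\hat{ap}(m)}$; this works cleanly only if you dilate $ap$, not $f$, because dilating $f$ does not dilate $h$. To conclude $\hat h(m)=\hat g(m)$ by letting $ap_N\to z^m$ in $H^2$, you need the $h$-side pairing to be continuous under $H^2$-convergence. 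A priori you only know $(\hat h(m))\in\ell^\infty$, and $H^2$-continuity of that pairing is equivalent to $h\in H^2$, which is what you are trying to prove. Agreement on the dense set $a\CC[z]$ cannot close this by itself. For instance, with $a=(1-z)/2$ and $k(z)=\sum_{n\ge0}z^n$ one has $\langle k_r,az^j\rangle=\tfrac12(r^j-r^{j+1})\to0$ for every $j$, yet $k\ne0$. You also misplace where $\phi\in H^2$ is used. The Abel identity holds with any $u\in H^\infty$ in place of $ap$ and only needs $h$ to be defined; the hypothesis must enter in the limiting step.

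The missing idea is to keep the limit on the $f$-side, $\langle f,bp_N\rangle=\langle f,\phi\,ap_N\rangle$, and to choose $p_N$ so that both $ap_N\to z^m$ and $bp_N\to z^m\phi$ in $H^2$. Since $|a|^2+|b|^2=1$ on $\TT$, you have $\|ap\|^2+\|bp\|^2=\|p\|^2$, so this just asks for $p_N\to z^m/a$ in $H^2$. That is available precisely because $\phi\in H^2$: on $\TT$, $|1/a|^2=1+|\phi|^2$, and $1/a\in N^+$ since $a$ is outer, so $1/a\in H^2$. In fact no approximation is needed:
\[
\hat h(m)=\langle f,z^m\phi\rangle=\langle f,b\,z^m/a\rangle=\langle T_{\overline b}f,z^m/a\rangle=\langle T_{\overline a}g,z^m/a\rangle=\langle g,z^m\rangle=\hat g(m).
\]
The projection $P_+$ can be dropped because $z^m/a\in H^2$ while $\overline bf,\overline ag\in L^2$.

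The same device repairs your ($\Leftarrow$) direction. Your plan there is to prove $T_{\overline b}f=T_{\overline a}h$ for polynomial $f$ and pass to the limit coefficientwise, but this fails for non-polynomial $a$. The map $f\mapsto h$ is unbounded on $H^2$, so $h_N\to h$ only coefficientwise. With $(\hat a(k))$ merely in $\ell^2$, that does not give $\langle h_N,az^m\rangle\to\langle h,az^m\rangle$. Instead, suppose $h\in H^2$. Then $k:=T_{\overline b}f-T_{\overline a}h\in H^2$ satisfies $\langle k,z^j/a\rangle=\hat h(j)-\hat h(j)=0$ for all $j\ge0$. Since $1/a$ is outer in $H^2$, Beurling's theorem gives $k=0$. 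Uniqueness of $g$ then yields $g=h$, and \eqref{E:formula} follows.
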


The article \cite{Ra26a} also contained an example
concerning the sharpness of the hypothesis that $\phi\in H^2$
in Theorem~\ref{T:H2}.

\begin{theorem}[\protect{\cite[Corollary~6.2]{Ra26a}}]\label{T:oldexample}
There exists a Pythagorean pair $(b,a)$  such that 
$\phi:=b/a\in\cap_{p<2}H^p$,
 yet, for some $f\in\cH(b)$, the series $\sum_{n\ge0}\overline{\hat{\phi}(n)}\hat{f}(n)$ diverges.
\end{theorem}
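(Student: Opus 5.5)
Since the statement is \cite[Corollary~6.2]{Ra26a}, I would prove it by an explicit construction. The plan is to choose $\phi\in N^+$ with $\phi\in\cap_{p<2}H^p$ but $\phi\notin H^2$, and then exhibit $f\in\cH(b)$ whose Taylor coefficients are badly correlated with those of $\phi$.

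The first step is to pick a concrete $\phi$ with positive, slowly decaying coefficients. A natural choice is
\[
\hat\phi(n)=\frac{1}{\sqrt{n+1}\,\log(n+2)},
\]
or equivalently $\phi$ comparable to $(1-z)^{-1/2}\bigl(\log\frac{e}{1-z}\bigr)^{-1}$. This $\phi$ lies in $H^p$ for every $p<2$, because $|1-z|^{-1/2}$ is $p$-integrable on $\TT$ for $p<2$. It is not in $H^2$, since $\sum|\hat\phi(n)|^2=\sum 1/((n+1)\log^2(n+2))$ is borderline; one may adjust the log power so that it diverges, or just accept whatever is needed. Writing $\phi=b/a$ with $(b,a)$ a Pythagorean pair, as in the background section, gives the pair. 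The outer function $a$ is determined by $|a|^2=1/(1+|\phi|^2)$ on $\TT$.

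The second step is to produce elements of $\cH(b)$ without computing norms directly. I would use the standard fact that $a\,\cH(\ldots)$ structure gives $T_{\overline a}$ and $T_{\overline b}$ control. Concretely, for $h\in H^2$, I would take $f=T_{\overline{?}}\ldots$; a cleaner route is the known inclusion $aH^2\subset \cH(b)$, or more precisely $\mathcal M(a)\subset\cH(b)$ for non-extreme $b$. So I take $f=ah$ with $h\in H^2$. Then $\sum_n\overline{\hat\phi(n)}\hat f(n)=\langle f,\phi\rangle$, formally equal to $\int_\TT ah\,\overline{b/a}\,dm$. With $|a|\approx |\phi|^{-1}$ near the singularity $z=1$, we have $f\approx h/\phi$ there. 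The pairing then behaves like $\int h\,\overline{\phi}\,\phi^{-1}\ldots$, which needs more care. Rather than the integral, I would compute coefficients: $\hat f(n)$ for $f=ah$, with $h$ chosen to have coefficients aligned to $\hat\phi$.

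The main obstacle is that $\langle ah,\phi\rangle$ tends to converge because $a$ damps the singularity. So I would instead take $f$ in the range relation directly, namely $f=T_{\overline a}$-type elements: the functions $k_\lambda^b$ and, more usefully, $f=(1-\overline{b(0)}b)\ldots$. Failing that, I would pick $h$ with $\hat h(n)$ roughly $\overline{\hat\phi(n)}/\log(n+2)$ times a correction. I would then estimate $\hat f(n)\gtrsim\hat\phi(n)$ along a lacunary set. This is the step where I expect the real work, tuning the log powers so that $\sum\hat\phi(n)\hat f(n)$ diverges like $\sum 1/(n\log n)$ while $f$ stays in $\cH(b)$.
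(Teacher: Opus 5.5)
Your proposal has a genuine gap, and its first step points in the wrong direction. With $\hat{\phi}(n)=1/(\sqrt{n+1}\,\log(n+2))$, the series $\sum_n 1/((n+1)\log^2(n+2))$ converges by the integral test, so it is not borderline. Hence $\phi\in H^2$, and $\sum_n|\hat{\phi}(n)\hat{f}(n)|\le\|\phi\|_{H^2}\|f\|_{H^2}$ for every $f\in H^2\supset\cH(b)$, so no divergence is possible. Lowering the logarithmic power does not rescue the design. For instance, $\phi(z)=(1-z)^{-1/2}$ lies in $\cap_{p<2}H^p\setminus H^2$. But writing $1-z=\rho e^{i\theta}$ with $0<\rho<2$ and $|\theta|<\pi/2$ gives $\Re\phi=\rho^{-1/2}\cos(\theta/2)>1/2$ on $\DD$. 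So Theorem~\ref{T:positive} shows that $\sum_{n\ge0}\overline{\hat{\phi}(n)}\hat{f}(n)$ converges (to $\hat{g}(0)$) for \emph{every} $f\in\cH(b)$. More generally, if the Taylor coefficients of $\phi$ are positive, decreasing and convex, the classical Fej\'er-kernel positivity for convex sequences (applied to $\hat{\phi}(n)r^n$) gives $\Re\phi\ge\hat{\phi}(0)/2$ on $\DD$. This covers all your choices $\hat{\phi}(n)=(n+1)^{-1/2}\log^{-\beta}(n+2)$. So the phenomenon is not governed by the size of $\phi$. A valid example must have $\inf_{\DD}\Re\phi\le0$, and ``positive, slowly decaying coefficients'' is exactly the ansatz that cannot work.

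The second step, which is the real content of the statement, is not carried out. For $f=ah\in\mathcal M(a)\subset\cH(b)$, the partial sums are $\langle h,T_{\overline{a}}q_N\rangle_{H^2}$, where $q_N$ is the $N$-th partial sum of $\phi$. Everything therefore depends on the behaviour of $T_{\overline{a}}q_N$, which you never estimate, and you yourself expect convergence there. The remaining suggestions ($k^b_\lambda$, an $h$ ``aligned with $\hat{\phi}$ times a correction'', lacunary lower bounds) are placeholders, not arguments. A sound way to organize the problem is uniform boundedness. The functional $\Lambda_N f:=\sum_{n\le N}\overline{\hat{\phi}(n)}\hat{f}(n)=\langle f,q_N\rangle_{H^2}$ equals $\langle f,(I-T_bT_{\overline{b}})q_N\rangle_{\cH(b)}$, so its norm on $\cH(b)$ is $(\|q_N\|_{H^2}^2-\|T_{\overline{b}}q_N\|_{H^2}^2)^{1/2}$. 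Since polynomials are dense in $\cH(b)$ and $\Lambda_N p$ converges for polynomials $p$, Banach--Steinhaus shows that some $f\in\cH(b)$ has a divergent series if and only if this quantity is unbounded in $N$. What your proposal lacks is a concrete $\phi\in\cap_{p<2}H^p$ whose real part is not bounded below by a positive constant, together with a proof that $\|q_N\|_{H^2}^2-\|T_{\overline{b}}q_N\|_{H^2}^2$ is unbounded (or an equivalent explicit construction of $f$).
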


\subsection{The open problem and its solution}
Every example
of the failure of the formula \eqref{E:formula}
constructed until now, including that in
Theorem~\ref{T:oldexample}, has been of the same type.
In each case, it is the forward implication in
the equivalence \eqref{E:equiv}
that breaks down.
It was left as an open problem \cite[Problem~8.1]{Ra26a}
whether the reverse implication in \eqref{E:equiv} always holds.
In this article, we give a negative answer.

\begin{theorem}\label{T:newexample}
There exist a Pythagorean pair $(b,a)$  such that 
$\phi:=b/a\in\cap_{p<1}H^p$,
and a function $f\in H^2$ such that
$\sum_{m\ge0}\Bigl|\sum_{n\ge0}\overline{\hat{\phi}(n)}\hat{f}(m+n)\Bigr|^2<\infty$,
yet $f\notin \cH(b)$.
 \end{theorem}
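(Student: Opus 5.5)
The strategy is to exploit the gap between the \emph{formal} Toeplitz product $T_{\overline{\phi}}f$, whose $m$-th coefficient is $h_m:=\sum_{n\ge0}\overline{\hat\phi(n)}\hat f(m+n)$, and the \emph{genuine} membership criterion $T_{\overline b}f\in T_{\overline a}(H^2)$. Formally $T_{\overline b}f=T_{\overline a}T_{\overline\phi}f$, since $\overline b=\overline a\,\overline\phi$. If this identity held whenever the series defining $h:=\sum_m h_mz^m$ converge and $h\in H^2$, the reverse implication in \eqref{E:equiv} would follow. So we want $\phi$ badly enough behaved ($\phi\notin H^1$, but $\phi\in\cap_{p<1}H^p$) that the series $h_m$ converge, possibly only conditionally, to an $H^2$ function $h$ with $T_{\overline a}h\ne T_{\overline b}f$.

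\textbf{Step 1: a rigidity lemma.} Since $a$ is outer, $T_{\overline a}$ is injective. First I would prove that if $f\in\cH(b)$ and all the series $h_m$ converge, then the unique $g$ with $T_{\overline b}f=T_{\overline a}g$ must coincide with $h$. The plan is to pair both sides with $z^m$ and use $\hat{\overline b}=\hat{\overline a}*\hat{\overline\phi}$. This requires an Abel-summation argument: evaluate on $f_r(z)=f(rz)$ and use density of polynomials in $\cH(b)$ together with \eqref{E:formula} for $f_r$. If this lemma fails in general, I would instead verify $g\ne h$ directly for the specific example. Either way, the goal becomes: construct $f\in H^2$ such that $h\in H^2$ but $T_{\overline a}h\ne T_{\overline b}f$.

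\textbf{Step 2: the construction.} The cleanest way to arrange this is to make $h\equiv0$, so that every series $\sum_n\overline{\hat\phi(n)}\hat f(m+n)$ converges to $0$, while $T_{\overline b}f\ne0$. By Step 1, $f\in\cH(b)$ would then force $T_{\overline b}f=T_{\overline a}0=0$, a contradiction.

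I would take $\phi$ to be a lacunary-type or singular-inner-type function. For instance, $\phi=\sum_k c_k\,\theta_k$ with pieces such as $(1-z)^{-1}$-like blocks, or $\phi=\exp\bigl(\frac{1+z}{1-z}\bigr)$-flavoured factors tempered to lie in $\cap_{p<1}H^p$. Its coefficients should oscillate so that the Hankel-type sums are only conditionally convergent.

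A simple model to test first is $\phi(z)=(1-z)^{-1}$, multiplied by a factor that destroys $H^1$-membership only logarithmically. This model alone is insufficient: for the rational choice $b=(1+z)/2$ one checks that the reverse implication actually holds. So the singularity must be genuinely worse than rational, which is why the target class is $\cap_{p<1}H^p$. I would build $f$ blockwise with coefficients supported on dyadic blocks $[N_k,2N_k)$, chosen so that the tail sums against $\overline{\hat\phi}$ cancel exactly on each block. This is a finite linear system per block, solvable because $\phi$'s coefficients form a Toeplitz system. Meanwhile $\sum_k\|f_k\|_2^2<\infty$, and $\langle T_{\overline b}f,1\rangle=\sum_n\overline{\hat b(n)}\hat f(n)\ne0$.

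\textbf{Main obstacle.} The hardest step is the simultaneous control in Step 2. The block solutions must make all $h_m$ vanish, or at least be square-summable, while $\phi$ stays in $H^p$ for every $p<1$ and $\langle f,b\rangle$ stays bounded away from zero. I expect to need explicit estimates on the Pythagorean pair $(b,a)$ through the formulas $b=\phi a$ and $|a|^{-2}=1+|\phi|^2$ on $\TT$, so as to evaluate $\langle f,b\rangle$. The rigidity in Step 1 will also need care, since \eqref{E:formula} is only available for $f$ holomorphic near $\overline\DD$. If exact cancellation proves too rigid, I would fall back on making $h\in H^2$ small but nonzero and showing $T_{\overline a}h-T_{\overline b}f$ has a nonzero coefficient.
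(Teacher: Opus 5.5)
Your skeleton is the right one, and it is the paper's: show that $f\in\cH(b)$ forces $g=h$, then build $f\ne0$ with every $h_m=0$ while $\ker T_{\overline b}=\{0\}$. But both steps that carry the content are missing.

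\textbf{Step~1 does not work as sketched.} Abel summation gives $h_m=\lim_{r\to1}\hat{g}_r(m)$, where $g_r$ is the function attached to $f_r$. To conclude $h_m=\hat{g}(m)$ you need $f_r\to f$, at least weakly along a sequence, in $\cH(b)$. Density of polynomials gives no such thing: dilates can fail to converge, even with $\|f_r\|_{\cH(b)}\to\infty$ \cite{EFKMR16}, and Taylor partial sums need not approximate $f$ either \cite{Ra26b}. Your fallback of ``verifying $g\ne h$ for the specific example'' is not meaningful, since for $f\notin\cH(b)$ there is no $g$. What you need is precisely rigidity for your chosen $\phi$.

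The missing idea is to take $\phi$ with $\inf_{\DD}\Re\phi=c>0$ (Theorem~\ref{T:positive}). Then $1/\phi\in H^\infty$, and $T_{\overline b}(f-T_{1/\overline\phi}g)=0$ together with $(bH^2)^\perp\subset(aH^2)^\perp=\{0\}$ gives $f=T_{1/\overline\phi}g$. Write $q_N$ for the Taylor partial sums of $\phi$ and $q_N/\phi=1+r_N$ with $r_N\in z^{N+1}H^2$. This yields $\sum_{n=0}^{N}\overline{\hat{\phi}(n)}\hat{f}(n+m)=\hat{g}(m)+\langle g,z^mr_N\rangle$. The inequality $\Re(1/\phi)\ge c/|\phi|^2$ on $\TT$ gives $1+\|r_N\|_{H^2}^2\le\Re\hat{\phi}(0)/c$, so the error term tends to $0$. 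The same positivity gives $\phi\in\cap_{p<1}H^p$ and $\phi$ outer for free. Hence $b=a\phi$ is outer, and no estimate of $\langle f,b\rangle$ is needed.

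\textbf{Step~2 has neither a workable $\phi$ nor a mechanism.} A Hadamard-lacunary $\phi$ in some $H^p$ with $p>0$ has square-summable coefficients, so $\phi\in H^2$ and Theorem~\ref{T:H2} excludes any counterexample. Also, the reciprocal of a singular inner function is not even in $N^+$. ``Exact cancellation on dyadic blocks via a finite Toeplitz system'' leaves three difficulties untouched:
\begin{enumerate}[(a)]
\item You must satisfy infinitely many constraints with $\alpha_0=1$. For outer $\phi\in H^2$ this is impossible, so it has to be achieved through conditional convergence, exploiting $(\hat{\phi}(n))\notin\ell^2$.
\item You must control partial sums \emph{inside} each block, not only at block endpoints.
\item The $\ell^2$ costs of the blocks must be summable.
\end{enumerate}

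The paper takes $\phi=\sum_j2^{-j}/(1-\lambda_jz)$ with $\lambda_j=e^{2\pi i/2^j}$. Then $\Re\phi\ge1/2$ and $\hat{\phi}(k)=\sum_j2^{-j}\lambda_j^k$ is almost periodic. On the $M$-th block, of length $L_M$, it sets $\alpha_n=L_M^{-1}\sum_{j\le M+1}d_j\lambda_j^{-n}$, where $(d_j)$ solves a Vandermonde system cancelling the current residues $R_0,\dots,R_M$. The separation bound $\bigl|\sum_n(\lambda_\ell/\lambda_j)^n\bigr|\le2^j$ makes the partial sums across the block equal $(1-t/L_M)R_m$ up to an error at most $C_M/L_M$. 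Meanwhile the block costs only $C_M^2/L_M$ in $\ell^2$. Choosing $L_M\ge2^{2M+4}C_M^2$ gives both convergence of every series to $0$ and $(\alpha_n)\in\ell^2$.
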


The proof of Theorem~\ref{T:newexample} follows the
route  suggested in \cite[Proposition~8.2]{Ra26a}.
It also requires a new positive result that is of independent
interest:
that if $\phi$ is a Smirnov function
whose real part is bounded away from zero, then
the forward implication in \eqref{E:equiv} always holds.
We establish this positive result in \S\ref{S:positive},
prove Theorem~\ref{T:newexample}
in \S\ref{S:counterexample}, and  make some concluding
remarks 
in \S\ref{S:conclusion}.


\section{A new positive result}\label{S:positive}

The following theorem provides a new condition
under which the formula \eqref{E:formula} holds.

\begin{theorem}\label{T:positive}
Let $(b,a)$ be a Pythagorean pair, let $\phi:=b/a$, and suppose that
$\inf_{z\in\DD}\Re\phi(z)>0$.
If $f\in\cH(b)$, then 
\[
\sum_{m\ge0}\Bigl|\sum_{n\ge0}\overline{\hat{\phi}(n)}\hat{f}(m+n)\Bigr|^2<\infty,
\]
and the formula \eqref{E:formula} holds.
\end{theorem}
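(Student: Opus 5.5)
The plan is to identify the function $g$ in \eqref{E:TbTa} explicitly and then to show that the inner series in \eqref{E:formula} converge to its Taylor coefficients $\hat g(m)$.

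\emph{Step 1: consequences of the hypothesis.} Put $\delta:=\inf_{\DD}\Re\phi>0$. Then $|\phi|\ge\delta$ on $\DD$, so $\psi:=1/\phi=a/b$ lies in $H^\infty$ with $\|\psi\|_\infty\le1/\delta$. Moreover $\phi$ has positive real part, so it is outer, and therefore $b=\phi a$ is outer. By the Herglotz representation we can write $\phi(z)=ic+\int_\TT\frac{\zeta+z}{\zeta-z}\,d\mu(\zeta)$ with $\mu\ge0$. This gives $\hat\phi(n)=2\int\overline{\zeta}^{\,n}\,d\mu$ for $n\ge1$, so the coefficients of $\phi$ are bounded, with $|\hat\phi(n)|\le 2\Re\phi(0)$.

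\emph{Step 2: identification of $g$.} Let $f\in\cH(b)$, and let $g\in H^2$ satisfy $T_{\overline b}f=T_{\overline a}g$. Since $\overline a=\overline{\psi}\,\overline b$ with both symbols co-analytic, we have $T_{\overline a}=T_{\overline b}T_{\overline\psi}$. Hence $T_{\overline b}(f-T_{\overline\psi}g)=0$. Because $b$ is outer, $T_{\overline b}$ is injective, and so
\[
f=T_{\overline\psi}g, \qquad\text{that is,}\qquad \hat f(k)=\sum_{j\ge0}\overline{\hat\psi(j)}\,\hat g(k+j),
\]
where this series converges since $\psi\in H^\infty\subset H^2$.

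\emph{Step 3: the coefficient identity.} Formally,
\[
\sum_n\overline{\hat\phi(n)}\hat f(m+n)=\sum_l\overline{\widehat{\phi\psi}(l)}\,\hat g(m+l)=\hat g(m),
\]
because $\phi\psi=1$. If this holds, then $\sum_m|\cdot|^2=\|g\|_2^2<\infty$, and \eqref{E:TbTa} becomes \eqref{E:formula}.

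To make this rigorous, write $\phi_N$ for the $N$-th Taylor partial sum of $\phi$. Then
\[
\sum_{n\le N}\overline{\hat\phi(n)}\hat f(m+n)=\langle T_{\overline{\phi_N}}T_{\overline\psi}g,z^m\rangle=\langle g,z^m\phi_N\psi\rangle.
\]
The coefficients of $\phi_N\psi$ of order at most $N$ agree with those of $\phi\psi=1$. So $\phi_N\psi=1+z^{N+1}h_N$, and the error term is $\langle P_{>N+m}g,\,z^{m+N+1}h_N\rangle$, where $P_{>N+m}$ denotes projection onto frequencies greater than $N+m$.

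\emph{The main obstacle} is controlling this error term. The crude bound $\|h_N\|_2\le\|\psi\|_\infty\|\phi_N\|_2=O(\sqrt N)$ does not beat the $H^2$-tail of $g$ in general. I would first use the Herglotz structure to rewrite $\phi_N$ as $\mu$ integrated against the Dirichlet-type kernel $\sum_{n\le N}\overline\zeta^{\,n}z^n$. Then $(\phi_N-\phi)\psi$ becomes an integral of $\overline\zeta^{\,N+1}z^{N+1}\frac{\zeta}{\zeta-z}\,\psi(z)$ against $d\mu$, and pairing with $g$ against the tail part should make the error go to zero.

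Failing that, I would use an approximation argument instead. Take polynomials $p_k\to f$ in $\cH(b)$; for each $p_k$ the identity holds with $g_k=T_{\overline\phi}p_k$, and $g_k\to g$ in $H^2$. One then needs the linear functionals $f\mapsto\sum_n\overline{\hat\phi(n)}\hat f(m+n)$ to converge and be continuous on $\cH(b)$. By the representation $f=T_{\overline\psi}g$, this reduces to uniform boundedness of $g\mapsto\langle g,z^m\phi_N\psi\rangle$ on $H^2$, i.e.\ to showing $\sup_N\|P_{\ge m}(\phi_N\psi)\|_2<\infty$. I expect this step, where positivity of $\Re\phi$ must be used essentially, to be the heart of the proof.
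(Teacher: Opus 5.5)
You follow the paper's route through Steps 1--2 and the setup of Step 3. The paper also obtains $f=T_{1/\overline{\phi}}\,g$ and writes
\[
\sum_{n\le N}\overline{\hat\phi(n)}\hat f(m+n)=\hat g(m)+\langle g,z^m r_N\rangle,
\]
with $\phi_N/\phi=1+r_N$ and $r_N\in z^{N+1}H^2$; this $r_N$ is your $z^{N+1}h_N$. But your proposal stops exactly where the real work lies: you never prove $\sup_N\|h_N\|_2<\infty$. The Herglotz sketch is not an argument. The kernels $\zeta/(\zeta-z)$ are not in $H^2$, and nothing in the sketch controls the Cauchy transform of the modulated measure $\overline{\zeta}^{\,N+1}\mu$ after multiplication by $\psi$. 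The approximation fallback, as you note yourself, needs the very same bound. So, as written, you have not shown that the inner series converge to $\hat g(m)$, and the theorem is unproved.

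The missing idea is a short positivity estimate on $\TT$ (with normalized arc length).
\begin{itemize}
\item Set $c:=\inf_{\DD}\Re\phi$. The boundary values satisfy $\Re\phi\ge c$ a.e., so $\Re\psi=\Re\phi/|\phi|^2\ge c|\psi|^2$ a.e. on $\TT$.
\item Both $1$ and the degree-$N$ polynomial $\phi_N$ are orthogonal to $z^{N+1}h_N$.
\end{itemize}
Hence
\[
1+\|h_N\|_2^2=\int_{\TT}|\phi_N|^2|\psi|^2\le\frac1c\,\Re\int_{\TT}|\phi_N|^2\,\overline{\psi}=\frac1c\,\Re\langle\phi_N,\,1+z^{N+1}h_N\rangle=\frac{\Re\hat\phi(0)}{c}.
\]
With this uniform bound your direct route closes at once. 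The error term is at most
\[
\bigl(\Re\hat\phi(0)/c\bigr)^{1/2}\Bigl(\sum_{n>N+m}|\hat g(n)|^2\Bigr)^{1/2},
\]
which tends to $0$ as $N\to\infty$. So $\sum_{n\ge0}\overline{\hat\phi(n)}\hat f(m+n)=\hat g(m)$ for every $m$, and \eqref{E:TbTa} becomes \eqref{E:formula}. This is precisely the paper's proof. The Herglotz representation, the boundedness of the coefficients, and the density argument are all unnecessary.
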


\begin{proof}
Let $f\in \cH(b)$. 
Then, as mentioned in the introduction, there is a unique $g\in H^2$
such that $T_{\overline{b}}f=T_{\overline{a}}g$,
and
\[
\|f\|_{\cH(b)}^2=\|f\|_{H^2}^2+\|g\|_{H^2}^2.
\]
Since $1/\phi\in H^\infty$ and $a=b/\phi$, we have
\[
0=T_{\overline{b}}f-T_{\overline{a}}g=T_{\overline{b}}(f-T_{1/\overline{\phi}}(g)).
\]
Hence 
\[
f-T_{1/\overline{\phi}}(g)\in \ker T_{\overline{b}}=(bH^2)^\perp
\subset ((1/\phi)bH^2)^\perp=(aH^2)^\perp=\{0\},
\]
where the last equality holds because $a$ is outer.
Therefore $T_{1/\overline{\phi}}(g)=f$.

For $N\ge0$, set 
\[
q_N(z):=\sum_{k=0}^N\hat{\phi}(k)z^k.
\]
Then, for each $m\ge0$, we have
\[
\sum_{n=0}^N\overline{\hat{\phi}(n)}\hat{f}(n+m)
=\bigl\langle f,\,z^mq_N\bigr\rangle_{H^2}
=\bigl\langle T_{1/\overline{\phi}}(g),\,z^mq_N\bigr\rangle_{H^2}
=\bigl\langle g,\,z^mq_N/\phi\bigr\rangle_{H^2}.
\]
Writing $q_N/\phi=1+r_N$, where $r_N\in z^{N+1}H^2$,
we obtain
\[
\bigl\langle g,\,z^mq_N/\phi\bigr\rangle_{H^2}
=\bigl\langle g,\,z^m+z^mr_N\bigr\rangle_{H^2}
=\hat{g}(m)+\bigl\langle g,\,z^mr_N\bigr\rangle_{H^2}.
\]
To summarize, we have
\begin{equation}\label{E:summary}
\sum_{n=0}^N\overline{\hat{\phi}(n)}\hat{f}(n+m)
=\hat{g}(m)+\bigl\langle g,\,z^mr_N\bigr\rangle_{H^2}
\quad(m\ge0,\,N\ge0).
\end{equation}
Now, since $r_N\in z^{N+1}H^2$, it is orthogonal to $1$, so
\[
1+\|r_N\|_{H^2}^2=\|1+r_N\|_{H^2}^2
=\|q_N/\phi\|_{H^2}^2
=\bigl\langle q_N/\phi,\,q_N/\phi\bigr\rangle_{H^2}
=\bigl\langle |q_N|^2,\,1/|\phi|^2\bigr\rangle_{L^2(\TT)}.
\]
By hypothesis, there exists $c>0$ such that $\Re\phi\ge c$ on $\DD$, and therefore we have
$\Re(1/\phi)\ge c/|\phi|^2$ a.e.\ on $\TT$.
Hence
\begin{align*}
\bigl\langle |q_N|^2,\,1/|\phi|^2\bigr\rangle_{L^2(\TT)}
&\le (1/c)\Re\bigl\langle |q_N|^2,\,1/\phi\bigr\rangle_{L^2(\TT)}
=(1/c)\Re\bigl\langle q_N,\,q_N/\phi\bigr\rangle_{H^2}\\
&=(1/c)\Re\bigl\langle q_N,\,1+r_N\rangle_{H^2}
=(1/c)\Re\langle q_N,1\rangle=\Re\hat{\phi}(0)/c.
\end{align*}
Therefore
\[
\|r_N\|_{H^2}\le (|\hat{\phi}(0)|/c)^{1/2},
\]
and so, by Cauchy--Schwarz,
\[
\bigl|\langle g,\,z^mr_N\rangle_{H^2}\bigr|
\le \Bigl(\sum_{n\ge m+N+1}|\hat{g}(n)|^2\Bigr)^{1/2}\Bigl(|\hat{\phi}(0)|/c\Bigr)^{1/2}.
\]
The right-hand side tends to zero as $N\to\infty$.
Substituting this information into \eqref{E:summary},
we deduce that
\[
\sum_{n=0}^\infty\overline{\hat{\phi}(n)}\hat{f}(n+m)
=\hat{g}(m) \quad(m\ge0).
\]
Hence, finally, 
\[
\sum_{m\ge0}\Bigl|\sum_{n=0}^\infty\overline{\hat{\phi}(n)}\hat{f}(n+m)
\Bigr|^2=\sum_{m\ge0}|\hat{g}(m)|^2=\|g\|_{H^2}^2<\infty,
\]
and
\[
\|f\|_{\cH(b)}^2
=\|f\|_{H^2}^2+\|g\|_{H^2}^2
=\sum_{m\ge0}|\hat{f}(m)|^2
+\sum_{m\ge0}\Bigl|\sum_{n\ge0}\overline{\hat{\phi}(n)}\hat{f}(m+n)\Bigr|^2.\qedhere
\]
\end{proof}


\section{The counterexample}\label{S:counterexample}

Theorem~\ref{T:newexample} will be a consequence
of the following, more precise result.

\begin{theorem}\label{T:counterex}
Define $\phi:\DD\to\CC$  by
\begin{equation}\label{E:counterex}
\phi(z):=\sum_{j=1}^\infty \frac{2^{-j}}{1-e^{2\pi i/2^j}z}
\quad(z\in\DD).
\end{equation}
\begin{enumerate}[\normalfont(i)]
\item
The function $\phi$ is  outer and belongs to $\cap_{p<1}H^p$.
Consequently $\phi=b/a$, where $(b,a)$ is a Pythagorean pair
with $b$ outer.
\item
There exists $f\in H^2$ such that
\[
\sum_{n\ge0}\overline{\hat{\phi}(n)}\hat{f}(n+m)=0 \quad(\forall m\ge0),
\]
yet $f\notin\cH(b)$.
\end{enumerate}
\end{theorem}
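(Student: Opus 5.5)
The plan has three parts: show that $\Re\phi$ is bounded below, so that Theorem~\ref{T:positive} applies; use it to reduce (ii) to finding a single nonzero $f$; then construct that $f$.

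\emph{Part (i).} Put $w_j:=e^{2\pi i/2^j}$. For $|w|=1$ and $z\in\DD$ one has $\Re\bigl(1/(1-wz)\bigr)>1/2$, since $u\mapsto1/(1-u)$ maps $\DD$ onto the half-plane $\Re>1/2$. Summing with the weights $2^{-j}$ gives $\Re\phi>\sum_j 2^{-j}/2=1/2$ on $\DD$. A holomorphic function with positive real part is outer, so $\phi$ is outer.

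For $0<p<1$, each kernel $k_j(z):=1/(1-w_jz)$ satisfies $\|k_j\|_{H^p}=\|1/(1-z)\|_{H^p}=:C_p<\infty$, by rotation invariance. The quasi-norm on $H^p$ satisfies the $p$-triangle inequality $\|\sum u_j\|_p^p\le\sum\|u_j\|_p^p$. Hence $\|\phi\|_{H^p}^p\le C_p^p\sum_j2^{-jp}<\infty$, and the series converges in $H^p$ to $\phi$. So $\phi\in\cap_{p<1}H^p\subset N^+$, and we may write $\phi=b/a$ with $(b,a)$ a Pythagorean pair. Since $b=\phi a$ is a product of two outer functions, $b$ is outer.

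\emph{Reduction for (ii).} By part (i), $\inf_\DD\Re\phi\ge1/2>0$, so Theorem~\ref{T:positive} applies. More precisely, its proof shows the following for $f\in\cH(b)$: the function $g$ with $T_{\overline b}f=T_{\overline a}g$ satisfies $\hat g(m)=\sum_{n\ge0}\overline{\hat\phi(n)}\hat f(n+m)$ for all $m$, and also $f=T_{1/\overline\phi}(g)$. So suppose $f\in H^2$ has all these sums equal to $0$. If $f$ were in $\cH(b)$, we would get $g=0$, hence $f=T_{1/\overline\phi}(0)=0$. It therefore suffices to produce a \emph{nonzero} $f\in H^2$ with
\[
\sum_{n\ge0}\overline{\hat\phi(n)}\hat f(n+m)=0\quad(m\ge0).
\]
Here the series must converge, at least in the limit of the partial sums $N\to\infty$.

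\emph{Construction of $f$ (main obstacle).} We have $\overline{\hat\phi(n)}=\sum_j2^{-j}\overline{w_j}^{\,n}$. Formally the $m$-th sum equals
\[
\sum_j 2^{-j}w_j^{m}\Bigl(f(\overline{w_j})-\sum_{k<m}\hat f(k)\overline{w_j}^{\,k}\Bigr),
\]
so the condition couples boundary values of $f$ at the roots of unity $\overline{w_j}$ with its Taylor sections. Equivalently, $f$ should lie in a generalized kernel of $T_{\overline\phi}$: we want $\overline\phi f\in\overline{zH^s}$ for some small $s>0$, interpreted via the partial sums $q_N$ as in the proof of Theorem~\ref{T:positive}.

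I would exploit the dyadic structure: every $w_j$ is a $2^k$-th root of unity for $k\ge j$. This suggests taking $f$ lacunary, with $\hat f$ supported on blocks adapted to the residues mod $2^k$, and with coefficients chosen recursively. At stage $k$, the contributions of $w_1,\dots,w_k$ cancel exactly on each block (a finite linear system, solvable because these are distinct roots of unity). The tail $j>k$ then contributes only $O(2^{-k})$ times the block size, and this must be absorbed into the next block.

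The work will be in two places. First, choosing the block sizes so that $\sum|\hat f(n)|^2<\infty$ while the tails still cancel. Second, checking convergence of the $N$-partial sums; here $\Re\phi\ge1/2$ again gives control of $q_N/\phi$. This convergence-with-cancellation step is where I expect the real difficulty to lie, and it is presumably the route indicated in \cite[Proposition~8.2]{Ra26a}.
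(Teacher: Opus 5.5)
\textbf{There is a genuine gap: the construction in (ii) is missing, and the plan you sketch would not close as stated.}

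Your part (i) and your reduction in (ii) are correct. They differ only mildly from the paper. You use the $p$-triangle inequality instead of $\cos(p\pi/2)|\phi|^p\le\Re(\phi^p)$. You use ``positive real part $\Rightarrow$ outer''. And you conclude $f=0$ from $g=0$ and $f=T_{1/\overline{\phi}}(g)$, rather than from the norm formula plus Lemma~\ref{L:outer}; your version does not even need $b$ outer.

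The substance of (ii) is producing $(\alpha_n)\in\ell^2$ with $\alpha_0=1$ and $\sum_{n\ge m}\gamma_{n-m}\alpha_n=0$ for every $m$, where $\gamma_k=\hat\phi(k)$ and $f=\sum\overline{\alpha}_nz^n$. You only describe a plan for this and say yourself that the real difficulty lies there. The plan's bookkeeping fails as follows. Suppose each block only handles $w_1,\dots,w_k$, and the frequencies $j>k$ are controlled by $O(2^{-k})$ times the block's $\ell^1$-mass. That bound does not improve as the block is lengthened, since the block needs $\ell^1$-mass at least $\max_j|d_j|$ to produce the required resonant sums. Meanwhile the correction amplitudes $d_j$ come from inverting a Vandermonde system whose nodes $\overline{w_j}$ cluster at $1$, so they can be far larger than $2^k$ times the residual. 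Nothing then forces the errors ``absorbed into the next block'' to decay, nor $\sum|\alpha_n|^2<\infty$.

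The missing ingredient is an off-resonance estimate that is uniform in the block length. Write $\lambda_j=w_j$. Every $\lambda_\ell$ with $\ell\ne j$, tail $\ell>k$ included, satisfies $|1-\lambda_\ell/\lambda_j|\ge 2^{1-j}$. So the geometric sums $\sum_{n=N}^{N+t-1}(\lambda_\ell/\lambda_j)^n$ are bounded by $2^j$, and hence
\[
\Bigl|\sum_{n=N}^{N+t-1}\gamma_{n-m}\lambda_j^{-n}-t\,2^{-j}\lambda_j^{-m}\Bigr|\le 2^j\qquad(N\ge m\ge0,\ t\ge0).
\]
The paper uses this as follows.
\begin{enumerate}
\item At stage $M$, solve $\sum_{j=1}^{M+1}2^{-j}\lambda_j^{-m}d_j=-R_m$ ($0\le m\le M$) for the current residuals $R_m$.
\item Only afterwards choose $L_M=2+\lceil 2^{2M+4}C_M^2\rceil$, where $C_M=\sum_j2^j|d_j|$, and set $\alpha_n=L_M^{-1}\sum_jd_j\lambda_j^{-n}$ on a block of that length.
\item The resonant terms then cancel $R_m$ linearly in $t$. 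All off-resonant terms together are at most $C_M/L_M\le2^{-M-2}$, however large $C_M$ is. The block contributes at most $C_M^2/L_M\le 4^{-M}$ to $\sum|\alpha_n|^2$.
\item The bound $(1-t/L_M)|R_m|+2^{-M-2}$ controls every partial sum inside a block. This gives convergence of the full series for each fixed $m$, since equation $m$ enters the system at stage $M=m$.
\end{enumerate}
Note that $\Re\phi\ge1/2$ and the control of $q_N/\phi$ play no role in this convergence. That argument rested on $f=T_{1/\overline{\phi}}(g)$ with $g\in H^2$, which is not available for the $f$ you are constructing.
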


We shall use the following simple lemma.

\begin{lemma}\label{L:outer}
Let $(b,a)$ be a Pythagorean pair such that $b$ is an outer function.
Then $\|f\|_{\cH(b)}>\|f\|_{H^2}$ for all $f\in\cH(b)\setminus\{0\}$.
\end{lemma}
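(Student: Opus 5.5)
By \eqref{E:TbTa}, for $f\in\cH(b)$ we have $\|f\|_{\cH(b)}^2=\|f\|_{H^2}^2+\|g\|_{H^2}^2$, where $g\in H^2$ is the unique function with $T_{\overline{b}}f=T_{\overline{a}}g$. It therefore suffices to show that $g\ne0$ whenever $f\ne0$. The argument is by contradiction: suppose $f\in\cH(b)\setminus\{0\}$ has $g=0$. Then $T_{\overline{b}}f=T_{\overline{a}}0=0$, so $f\in\ker T_{\overline{b}}$. We now identify this kernel using the assumption that $b$ is outer.

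For $f\in H^2$ and $h\in H^2$ we have $\langle T_{\overline{b}}f,h\rangle_{H^2}=\langle f,bh\rangle_{H^2}$. Hence $\ker T_{\overline{b}}=(bH^2)^\perp$, exactly as in the proof of Theorem~\ref{T:positive}. Since $b$ is outer and bounded, Beurling's theorem shows that $bH^2$ is dense in $H^2$, so $(bH^2)^\perp=\{0\}$. Thus $f=0$, contradicting our assumption.

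It follows that $g\ne0$, so $\|g\|_{H^2}>0$, and therefore $\|f\|_{\cH(b)}^2=\|f\|_{H^2}^2+\|g\|_{H^2}^2>\|f\|_{H^2}^2$. There is no real obstacle here. The only points to check are the adjoint identity for $T_{\overline{b}}$, which is standard, and the uniqueness of $g$. Uniqueness also follows from $a$ being outer: $\ker T_{\overline{a}}=(aH^2)^\perp=\{0\}$, so $g$ is well defined.
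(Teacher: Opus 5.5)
Your proof is correct and follows the paper's argument essentially step for step. You use the norm identity \eqref{E:TbTa}, observe that $g=0$ forces $f\in\ker T_{\overline{b}}=(bH^2)^\perp$, and conclude that this space is $\{0\}$ because $b$ is outer; your added justifications (the adjoint identity and Beurling's theorem) are the standard facts the paper leaves implicit.
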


\begin{proof}
Let $f\in\cH(b)$. Then there exists a unique $g\in H^2$
such that $T_{\overline{b}}f=T_{\overline{a}}g$,
and $\|f\|_{\cH(b)}^2=\|f\|_{H^2}^2+\|g\|_{H^2}^2$.
It follows that $\|f\|_{\cH(b)}>\|f\|_{H^2}$ unless $g=0$.
But $g=0$ implies that $T_{\overline{b}}f=T_{\overline{a}}0=0$,
so that $f\in\ker(T_{\overline{b}})=(bH^2)^\perp=\{0\}$,
where the last equality holds because $b$ is outer. 
Thus $\|f\|_{\cH(b)}>\|f\|_{H^2}$
unless $f=0$.
\end{proof}

\begin{proof}[Proof of Theorem~\ref{T:counterex}]
(i) The series defining $\phi$ converges locally uniformly on $\DD$,
so $\phi$ is holomorphic in $\DD$.
Also, we have
\[
 \Re\frac{1}{1-e^{2\pi i/2^j} z}
 =\frac{1}{2}+\frac{1-|z|^2}{2|1-e^{2\pi i/2^j} z|^2}\ge\frac{1}{2} \quad(j\ge1).
\]
Since $\sum_{j\ge1}2^{-j}=1$, this implies that
\begin{equation}\label{E:realpart}
\Re\phi(z)\ge1/2 \quad(z\in\DD).
\end{equation}

For $0<p<1$, the principal power $\phi^p$ is holomorphic and satisfies
\[
 \cos(p\pi/2)|\phi(z)|^p\le\Re\bigl(\phi(z)^p\bigr).
\]
The mean-value property consequently gives
\[
 \sup_{0<r<1}\frac1{2\pi}\int_0^{2\pi}
 |\phi(re^{it})|^p\,dt\le\frac{\Re(\phi(0)^p)}{\cos(p\pi/2)}
=\frac{1}{\cos(p\pi/2)}.
\]
Thus $\phi\in H^p$ for every $0<p<1$, and in particular $\phi\in N^+$.
Furthermore, \eqref{E:realpart} implies that $1/\phi\in H^\infty$.
The memberships $\phi\in H^p$ and $1/\phi\in H^\infty$ together
imply that $\phi$ is outer. 

Since $\phi\in N^+$,
we can write it as  $\phi=b/a$, where $(b,a)$ is a Pythagorean pair.
Then $b=a\phi$, the product of two outer
functions, so $b$ is also outer.

\bigskip

(ii) Let us write $\gamma_k:=\hat{\phi}(k)$. 
We shall construct a sequence $(\alpha_n)\in\ell^2(\ZZ^+)$
such that $\alpha_0=1$ and
\begin{equation}\label{E:alpha}
\sum_{n=m}^\infty \gamma_{n-m}\alpha_n=0 \quad(m\ge0).
\end{equation}
Assuming that such a sequence exists,
set $f(z):=\sum_{n\ge0}\overline{\alpha}_nz^n$.
Then $f\in H^2$, and \eqref{E:alpha} gives
\[
\sum_{n\ge0}\overline{\hat{\phi}(n)}\hat{f}(n+m)=0 \quad(\forall m\ge0).
\]
Further, we have $f\notin \cH(b)$. Indeed, suppose, on the contrary,
that $f\in\cH(b)$. Since $\Re\phi\ge 1/2$ on $\DD$, Theorem~\ref{T:positive} applies, and from \eqref{E:formula}
we get $\|f\|_{\cH(b)}=\|f\|_{H^2}$. As $b$ is outer, 
Lemma~\ref{L:outer} tells us that $f=0$. This contradicts the fact that $\hat{f}(0)=\overline{\alpha}_0=1$. Thus  (ii) is proved,
subject to the construction of the sequence $(\alpha_n)$.

Before carrying out this construction,
we establish an auxiliary bound.
Let us write
$\lambda_j:=\exp(2\pi i/2^j)$.
We claim that 
\begin{equation}\label{E:claim}
\Bigl|\sum_{n=N}^{N+t-1}\gamma_{n-m}\lambda_j^{-n}-t2^{-j}\lambda_j^{-m}\Bigr|\le 2^j 
\qquad(j\ge1,~t\ge0,~N\ge m\ge0).
\end{equation}
To see this, note that, expanding the summands in
\eqref{E:counterex}, we get
\[
\gamma_k=\sum_{j=1}^\infty 2^{-j}\lambda_j^k
\quad(k\ge0).
\]
It follows that, for all integers $N\ge m\ge0$ and $t\ge0$,
\[
\sum_{n=N}^{N+t-1}\gamma_{n-m}\lambda_j^{-n}
-t2^{-j}\lambda_j^{-m}
=\sum_{\substack{\ell=1\\ \ell\ne j}}^\infty2^{-\ell}\lambda_\ell^{-m}\sum_{n=N}^{N+t-1}(\lambda_\ell/\lambda_j)^n.
\]
Now, for distinct integers $j,\ell\ge1$, the angular separation
of $\lambda_j$ and $\lambda_\ell$ lies between $\pi/2^j$
and $\pi$, so
\[
|1-\lambda_\ell/\lambda_j|\ge 2\sin(\pi/2^{j+1})\ge 2^{1-j},
\]
and hence 
\[
\Bigl|\sum_{n=N}^{N+t-1}(\lambda_\ell/\lambda_j)^n\Bigr|
\le \frac{2}{|1-\lambda_\ell/\lambda_j|}\le 2^j.
\]
It follows that
\[
\Bigl|\sum_{n=N}^{N+t-1}\gamma_{n-m}\lambda_j^{-n}-t2^{-j}\lambda_j^{-m}\Bigr|
=\Bigl| \sum_{\substack{\ell=1\\ \ell\ne j}}^\infty2^{-\ell}\lambda_\ell^{-m}\sum_{n=N}^{N+t-1}(\lambda_\ell/\lambda_j)^n\Bigr|
\le \sum_{\ell=1}^\infty 2^{-\ell}2^j=2^j.
\]
This justifies the claim \eqref{E:claim}.

We now construct the sequence $(\alpha_n)_{n\ge0}$ at the same time as a strictly increasing sequence of positive integers $(N_M)_{M\ge0}$.
The construction is inductive, starting with $\alpha_0=1$
and $N_0=1$. Let $M\ge0$, and suppose that
$N_0,\dots,N_M$ and
$\alpha_0,\dots,\alpha_{N_M-1}$  have been chosen.
For $0\le m\le M$, set
\[
R_m:=\sum_{n=m}^{N_M-1}\gamma_{n-m}\alpha_n.
\]
Choose complex numbers $d_1,\dots,d_{M+1}$ to solve the system
\begin{equation}\label{E:system}
\sum_{j=1}^{M+1}2^{-j}\lambda_j^{-m}d_j=-R_m
\quad(0\le m\le M).
\end{equation}
The matrix of this system is a Vandermonde matrix in the distinct 
numbers $\lambda_j^{-1}$, with non-zero column factors $2^{-j}$,
so the solution exists and is unique. Define
\[
C_M:=\sum_{j=1}^{M+1}2^j|d_j|,
\quad
L_M:=2+\lceil 2^{2M+4}C_M^2\rceil
\quad
\text{and}\quad
N_{M+1}:=N_M+L_M,
\]
and set
\begin{equation}\label{E:alphadef}
\alpha_n:=\frac{1}{L_M}\sum_{j=1}^{M+1}d_j\lambda_j^{-n}
\quad(N_M\le n\le N_{M+1}-1).
\end{equation}
Clearly $N_{M+1}>N_M$.
Also, 
using the definitions
of $R_m$ and then $\alpha_n$, we have,
for $0\le m\le M$ and $0\le t\le L_M$, 
\begin{align*}
&\sum_{n=m}^{N_M+t-1}\gamma_{n-m}\alpha_n\\
&=R_m+\sum_{n=N_M}^{N_M+t-1}\gamma_{n-m}\alpha_n\\
&=R_m+\sum_{n=N_M}^{N_M+t-1}\gamma_{n-m}\frac{1}{L_M}\sum_{j=1}^{M+1}d_j\lambda_j^{-n}\\
&=R_m+\frac{1}{L_M}\sum_{j=1}^{M+1}d_j\sum_{n=N_M}^{N_M+t-1}\gamma_{n-m}\lambda_j^{-n}\\
&=R_m+\frac{1}{L_M}\sum_{j=1}^{M+1}d_jt2^{-j}\lambda_j^{-m}
+\frac{1}{L_M}\sum_{j=1}^{M+1}d_j\Bigl(\sum_{n=N_M}^{N_M+t-1}\gamma_{n-m}\lambda_j^{-n}-t2^{-j}\lambda_j^{-m}\Bigr)\\
&=R_m-\frac{t}{L_M}R_m+\frac{1}{L_M}\sum_{j=1}^{M+1}d_j\Bigl(\sum_{n=N_M}^{N_M+t-1}\gamma_{n-m}\lambda_j^{-n}-t2^{-j}\lambda_j^{-m}\Bigr),
\end{align*}
where, in the last line, we used \eqref{E:system}.
Also, using \eqref{E:claim}, we have
\begin{align*}
&\Bigl|\frac{1}{L_M}\sum_{j=1}^{M+1}d_j\Bigl(\sum_{n=N_M}^{N_M+t-1}\gamma_{n-m}\lambda_j^{-n}-t2^{-j}\lambda_j^{-m}\Bigr)\Bigr|\\
&\le \frac{1}{L_M}\sum_{j=1}^{M+1}|d_j|2^j=\frac{C_M}{L_M}\le \frac{C_M}{\sqrt{L_M}}\le 2^{-M-2}.
\end{align*}
Putting everything together, we deduce that,
for $0\le m\le M$ and $0\le t\le L_M$,
\begin{equation}\label{E:overall}
\Bigl|\sum_{n=m}^{N_M+t-1}\gamma_{n-m}\alpha_n\Bigr|\le (1-t/L_M)|R_m|+2^{-M-2}.
\end{equation}
In particular, taking $t=L_M$, and recalling that $N_M+L_M=N_{M+1}$, we get
\begin{equation}\label{E:overall2}
\Bigl|\sum_{n=m}^{N_{M+1}-1}\gamma_{n-m}\alpha_n\Bigr|\le 2^{-M-2}
\quad(0\le m\le M).
\end{equation}

Now fix $m\ge0$.
For every $M>m$, the inequality \eqref{E:overall2} from stage $M-1$
shows that $|R_m|\le 2^{-M-1}$. Substituting this into
\eqref{E:overall}, we deduce that
\[
\Bigl|\sum_{n=m}^{N_M+t-1}\gamma_{n-m}\alpha_n\Bigr|\le (1-t/L_M)|R_m|+2^{-M-2} <2^{-M}
\quad(0\le t\le L_M),
\]
whence
\[
\Bigl|\sum_{n=m}^{N-1}\gamma_{n-m}\alpha_n\Bigr|\le 2^{-M}
\quad(N_M\le N\le N_{M+1}).
\]
This implies that
\[
\sum_{n=m}^\infty \gamma_{n-m}\alpha_n=0.
\]
As this holds for each $m\ge0$, we have established \eqref{E:alpha}.

All that remains is to check that the sequence $(\alpha_n)$ belongs to $\ell^2(\ZZ^+)$. For this, we go back to the definition \eqref{E:alphadef} of $\alpha_n$. For $N_M\le n<N_{M+1}$, we have
\[
|\alpha_n|
=\Bigl|\frac{1}{L_M}\sum_{j=1}^{M+1}d_j\lambda_j^{-n}\Bigr|
\le \frac{1}{L_M}\sum_{j=1}^{M+1}|d_j|\le \frac{C_M}{L_M}.
\]
Hence
\begin{align*}
\sum_{n=1}^\infty|\alpha_n|^2
&=\sum_{M=0}^\infty\sum_{n=N_M}^{N_{M+1}-1}|\alpha_n|^2
\le \sum_{M=0}^\infty(N_{M+1}-N_M)C_M^2/L_M^2\\
&=\sum_{M=0}^\infty C_M^2/L_M
\le\sum_{M=0}^\infty 2^{-2M}<\infty.
\end{align*}
The proof is complete.
\end{proof}


\section{Concluding remarks}\label{S:conclusion}

\subsection{The case when $\phi\in H^p$}
We have not yet commented on the fact that 
Theorem~\ref{T:newexample} yields a counterexample
with  $\phi\in\cap_{p<1}H^p$. This turns out to be of interest
in relation to another theorem from \cite{Ra26a}.
The following result was originally conceived as an attempt
to generalize Theorem~\ref{T:H2} from $H^2$ to other Hardy spaces $H^p$
with $p<2$.

\begin{theorem}[\protect{\cite[Theorem~4.1]{Ra26a}}]
Let $(b,a)$ be a Pythagorean pair such that $\phi:=b/a\in H^p$,
where $0<p\le 2$. Let $f\in H^2$ be a function 
whose Taylor coefficients satisfy
\begin{equation}\label{E:pcond}
\sum_{m\ge1}m^{2/p-1}|\hat{f}(m)|^2<\infty.
\end{equation}
If 
\[
\sum_{m\ge0}\Bigl|\sum_{n\ge0}\overline{\hat{\phi}(n)}\hat{f}(m+n)\Bigr|^2<\infty,
\]
then $f\in\cH(b)$ and \eqref{E:formula} holds.
\end{theorem}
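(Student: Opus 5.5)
This theorem is quoted from \cite[Theorem~4.1]{Ra26a}; here is how I would prove it directly. The aim is to produce $g\in H^2$ with $T_{\overline b}f=T_{\overline a}g$, namely
\[
\hat g(m):=\sum_{n\ge0}\overline{\hat\phi(n)}\hat f(m+n),
\]
and then read off \eqref{E:formula} from \eqref{E:TbTa}. By hypothesis this $g$ lies in $H^2$. The real issue is convergence, because $\phi$ is only in $H^p$. Condition \eqref{E:pcond} is tailored to this. For $p\le1$, the Hardy--Littlewood type estimate $|\hat\phi(n)|\lesssim n^{1/p-1}$ gives, by Cauchy--Schwarz,
\[
\sum_n |\hat\phi(n)\hat f(m+n)|\le \Bigl(\sum_n n^{2/p-1}|\hat f(m+n)|^2\Bigr)^{1/2}\Bigl(\sum_n n^{-1-\epsilon}\cdots\Bigr)^{1/2}.
\]
This needs some care at the boundary exponent. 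I would instead use the sharper coefficient inequality $\sum_n n^{p-2}|\hat\phi(n)|^p<\infty$ together with duality: $H^p$ embeds into a weighted $\ell^2$-type space with weight $(n+1)^{1-2/p}$ when $p\le2$, i.e.\ $\sum (n+1)^{1-2/p}|\hat\phi(n)|^2<\infty$. This is the Hardy--Littlewood inequality for $p\le 2$. Cauchy--Schwarz against the weight $(n+1)^{2/p-1}$ from \eqref{E:pcond} then gives absolute convergence of every inner series, uniformly in $m$.

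Next I identify $g$ as the correct function. For a polynomial test function $h$, I want $\langle T_{\overline b}f,h\rangle=\langle T_{\overline a}g,h\rangle$, that is, $\langle f,bh\rangle=\langle g,ah\rangle$. Write $bh=\phi\cdot(ah)$. Formally $\langle f,\phi\, ah\rangle=\langle g,ah\rangle$, since $g=T_{\overline\phi}f$ coefficientwise. To make this rigorous, I would replace $\phi$ by its dilates $\phi_r(z)=\phi(rz)$, which lie in $H^\infty$. With $g_r:=T_{\overline{\phi_r}}f$, exact identities hold:
\[
\langle f,\phi_r ah\rangle=\langle g_r,ah\rangle.
\]
The weighted bound above shows that $\hat g_r(m)\to\hat g(m)$ for each $m$, and also that $g_r\to g$ weakly in $H^2$. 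Here I would dominate $\sum_m|\hat g_r(m)-\hat g(m)|^2$ using the tail form of the weighted Cauchy--Schwarz bound. On the left side, $\phi_r ah\to bh$ in $H^2$ requires $\phi_r a\to\phi a=b$ in $H^2$, which fails in general. I would therefore move the approximation onto the other side: choose $h$ such that $ah$ ranges over a dense set. Since $a$ is outer, $aH^2$ is dense, so it suffices to test on $ah=k$ with $k$ a polynomial. This requires $\langle f,\phi k\rangle=\langle g,k\rangle$ for polynomials $k$ with $k/a\in H^2$. The identity $\langle f,\phi k\rangle=\sum_m\overline{\hat k(m)}\hat g(m)$ holds by absolute convergence: $\phi k$ has coefficients that are finite combinations of shifts of $\hat\phi$, and the pairing with $f$ converges by the weighted estimate.

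The main obstacle is passing from the polynomials $k$ to a genuine identity $T_{\overline b}f=T_{\overline a}g$. This needs $\langle f,bh\rangle=\langle g,ah\rangle$ for all polynomials $h$, while I only have it for $k$ polynomial and $ah=k$. I would handle this by approximating $ah$ by polynomials $k_j$ in a norm strong enough that $\phi k_j\to bh$ in the dual of the weighted space defined by \eqref{E:pcond}. I would first try Fej\'er means of $ah$, with $h$ fixed, using the fact that multiplication by $\phi$ is controlled from such a Dirichlet-type norm. Once the identity is established, $f\in\cH(b)$ with this $g$, and \eqref{E:TbTa} yields \eqref{E:formula}.
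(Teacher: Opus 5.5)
Your first step is sound. The weighted Hardy--Littlewood inequality $\sum_n (n+1)^{1-2/p}|\hat\phi(n)|^2\lesssim\|\phi\|_{H^p}^2$ ($0<p\le 2$), combined with \eqref{E:pcond} and Cauchy--Schwarz, makes every series defining $\hat g(m)$ absolutely convergent. More generally, $\psi\mapsto\sum_n\hat f(n)\overline{\hat\psi(n)}$ is continuous on the weighted space $W$ with weight $(n+1)^{1-2/p}$, and hence on $H^p$. The identity $\sum_n\hat f(n)\overline{\widehat{\phi k}(n)}=\langle g,k\rangle$ for polynomials $k$ is also correct.

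\textbf{The gap.} It sits exactly at the step you call the main obstacle: you leave it as a plan, and the mechanism you offer does not work. You propose to approximate $ah$ by polynomials in a Dirichlet-type norm and to use that multiplication by $\phi$ is controlled from that norm into $W$. This fails on both counts.
\begin{enumerate}[(i)]
\item For $p<2$, $ah$ is merely bounded. It need not lie in $\{F:\sum(n+1)^{2/p-1}|\hat F(n)|^2<\infty\}$, so no polynomials converge to it in that norm.
\item The multiplier bound itself fails in general. At $p=2$ it would say that $\phi$ multiplies $H^2$, i.e.\ $\phi\in H^\infty$.
\item The $H^\infty$ norm, from which multiplication by $\phi$ into $H^p$ (hence into $W$) really is bounded, is no help either: Fej\'er means of a general bounded function do not converge uniformly.
\end{enumerate}
Two side steps should also be discarded. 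The reduction to polynomials $k$ with $k/a\in H^2$ does not help: density of $aH^2$ is not what is needed, and for suitable outer $a$ the only such polynomial is $0$. In the dilation detour, dominating $\sum_m|\hat g_r(m)-\hat g(m)|^2$ by the tail bound would require $\sum_k(k+1)^{2/p}|\hat f(k)|^2<\infty$, which you do not have.

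\textbf{The missing idea.} Get convergence in $H^p$ by dominated convergence, not from a norm estimate on $k_N-ah$. Fix a polynomial $h$, set $u:=ah\in H^\infty$, and let $k_N$ be the Fej\'er means of $u$. Then $\|k_N\|_\infty\le\|u\|_\infty$, and $k_N\to u$ both a.e.\ on $\TT$ and in $H^2$. Since
\[
|\phi|^p|k_N-u|^p\le 2^p\|u\|_\infty^p|\phi|^p\in L^1(\TT),
\]
dominated convergence gives $\|\phi k_N-bh\|_{H^p}\to0$, using $\phi u=bh$. The Hardy--Littlewood embedding $H^p\hookrightarrow W$ then gives $\phi k_N\to bh$ in $W$. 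By continuity of the pairing with $f$,
\[
\langle f,bh\rangle=\lim_N\sum_n\hat f(n)\overline{\widehat{\phi k_N}(n)}=\lim_N\langle g,k_N\rangle=\langle g,ah\rangle .
\]
Thus $\langle T_{\overline b}f,h\rangle=\langle T_{\overline a}g,h\rangle$ for every polynomial $h$, so $T_{\overline b}f=T_{\overline a}g$, and \eqref{E:TbTa} yields \eqref{E:formula}. With this step supplied, your outline becomes a complete proof.
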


When \cite{Ra26a} was published, 
it was not clear whether the extra condition \eqref{E:pcond}
was really needed for the validity of the theorem.
Thanks to Theorem~\ref{T:newexample}, 
we now know that the theorem would be false without \eqref{E:pcond},
at least if $0<p<1$.

\subsection{An application}
Theorem~\ref{T:H2} was used recently in \cite{Ra26b} in the construction 
of a Pythagorean pair $(b,a)$
and a function $f$ in the corresponding de Branges--Rovnyak space $\cH(b)$
with the following pathological property:
despite the  fact that polynomials are dense in $\cH(b)$,
the function $f$ lies outside the closed linear span
of the partial sums of its own Taylor series.

\section*{AI tools declaration}
In preparing this work, the author used ChatGPT to develop and explore strategies for the proof of Theorem~\ref{T:counterex}. The final mathematical  text was written entirely by the author, who thoroughly corrected, verified, and streamlined the proposed strategy. The author accepts full responsibility for the accuracy and integrity of the final results presented in this paper.

\bibliographystyle{plain}
\bibliography{bibfile.bib}

\end{document}